\documentclass{amsart}
\usepackage{amsmath}
\usepackage{amsfonts}
\usepackage{amssymb}
\usepackage[utf8]{inputenc}
\usepackage{amssymb}
\usepackage{mathrsfs}
\usepackage{amsthm}

\usepackage{enumerate}

\usepackage{graphicx}

\newtheorem{thrm}{Theorem}[section]

\newtheorem{lemma}[thrm]{Lemma}
\newtheorem{prop}[thrm]{Proposition}



\theoremstyle{definition}

\newtheorem{rem}[thrm]{Remark}




\DeclareMathOperator{\cl}{cl}

\begin{document}

\title[Some remarks on the projective properties]{Some remarks on the projective properties of Menger and Hurewicz}
\author[M.\ Krupski]{Miko\l aj Krupski}
\address{Departamento de Matem\'{a}ticas \\ Universidad de Murcia \\ Campus de Espinardo \\ 30100 Murcia \\ Spain \\ and \\ Institute of Mathematics\\ University of Warsaw\\ ul. Banacha 2\\
02--097 Warszawa, Poland }
\email{mkrupski@mimuw.edu.pl}
\author[K.\ Kucharski]{Kacper Kucharski}
\address{Institute of Mathematics\\ University of Warsaw\\ ul. Banacha 2\\
02--097 Warszawa, Poland }
\email{k.kucharski6@uw.edu.pl}

\begin{abstract}
It is known that both the Menger and Hurewicz property of a Tychonoff space $X$ can be described by the way $X$ is placed in its \v{C}ech-Stone compactification $\beta X$. We provide analogous characterizations for the projective versions of the properties of Menger and Hurewicz.
\end{abstract}

\subjclass[2020]{Primary: 54D20, 54D40, 91A44}

\keywords{Menger space, Hurewicz space, projective Menger property, projective Hurewicz property, Porada game}

\maketitle

\section{Introduction}
All spaces in this note are assumed to be Tychonoff topological spaces. Let $\mathcal{P}$ be a topological property. We say that a space $X$ is \textit{projectively $\mathcal{P}$} provided every separable metrizable continuous image of $X$ has property $\mathcal{P}$. This notion was studied by several authors for different topological properties $\mathcal{P}$. In this note we are concerned with the case when $\mathcal{P}$ is either the Menger property or the property of Hurewicz.

Let us recall that a topological space $X$ is \textit{Menger} (resp., \textit{Hurewicz})
if for every sequence $(\mathscr{U}_n)_{n\in \omega}$ of open covers of $X$, there is a sequence $(\mathscr{V}_n)_{n\in\omega}$
such that for every $n$, $\mathscr{V}_n$ is a finite subfamily of $\mathscr{U}_n$ and the family $\bigcup_{n\in\omega} \mathscr{V}_n$ covers $X$
(resp., every point of $X$ belongs to $\bigcup\mathscr{V}_n$ for all but finitely many $n$'s).

The projective versions of the Menger and Hurewicz property were a subject of study before; see, e.g., Ko\v{c}inac \cite{Ko} or Bonanzinga, Cammaroto and Matveev \cite{BCM}. What makes them useful is the following fact (see \cite[Proposition 2]{T}, \cite[Proposition 8]{BCM}, \cite[Theorem 3.2]{Ko}, \cite[Proposition 31]{BCM}):
\begin{prop}
A space $X$ is Menger (resp., Hurewicz) if and only if $X$ is Lindel\"of and projectively Menger (resp., projectively Hurewicz).
\end{prop}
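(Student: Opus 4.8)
The plan is to prove the two implications separately; the forward direction is essentially a standard observation, and the content lies in the converse. If $X$ is Menger, then feeding the definition a constant sequence $(\mathscr{U},\mathscr{U},\dots)$ for an arbitrary open cover $\mathscr{U}$ produces a countable subcover, so $X$ is Lindel\"of. Moreover, Mengerness passes to continuous images: for a continuous surjection $g\colon X\to Y$ and a sequence $(\mathscr{U}_n)$ of open covers of $Y$, apply the Menger property of $X$ to the pulled-back covers $\{g^{-1}[U]:U\in\mathscr{U}_n\}$ and push the chosen finite subfamilies forward along $g$. Hence every separable metrizable continuous image of $X$ is Menger, i.e.\ $X$ is projectively Menger. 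The Hurewicz version is verbatim the same argument, carrying the phrase ``belongs to $\bigcup\mathscr{V}_n$ for all but finitely many $n$'' through the computation.

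For the converse, suppose $X$ is Lindel\"of and projectively Menger and fix a sequence $(\mathscr{U}_n)_{n\in\omega}$ of open covers of $X$. The first step is to reduce to countable covers by cozero sets: since $X$ is Tychonoff, every member of $\mathscr{U}_n$ is the union of the cozero sets contained in it, so the cozero sets that refine $\mathscr{U}_n$ form an open cover of $X$, from which Lindel\"ofness extracts a countable subcover $\{W_{n,k}:k\in\omega\}$ with each $W_{n,k}$ sitting inside some member of $\mathscr{U}_n$. It then suffices to find finite sets $F_n\subseteq\omega$ with $\bigcup_n\{W_{n,k}:k\in F_n\}=X$, because replacing each such $W_{n,k}$ by a member of $\mathscr{U}_n$ containing it yields the required finite subfamilies $\mathscr{V}_n\subseteq\mathscr{U}_n$.

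To produce the $F_n$'s, I would encode the $W_{n,k}$'s into a separable metrizable image of $X$. Choose continuous $f_{n,k}\colon X\to[0,1]$ with $W_{n,k}=f_{n,k}^{-1}((0,1])$ and let $f\colon X\to[0,1]^{\omega\times\omega}$ be the diagonal map; since $\omega\times\omega$ is countable, $Z:=f[X]$ is a separable metrizable continuous image of $X$, hence Menger by hypothesis. Writing $O_{n,k}:=Z\cap\pi_{n,k}^{-1}((0,1])$ with $\pi_{n,k}$ the $(n,k)$-th coordinate projection, one checks $f^{-1}[O_{n,k}]=W_{n,k}$, and since $\{W_{n,k}:k\in\omega\}$ covers $X$, the family $\{O_{n,k}:k\in\omega\}$ covers $Z$ for every $n$. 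Applying the Menger property of $Z$ to $\bigl(\{O_{n,k}:k\in\omega\}\bigr)_{n\in\omega}$ yields finite $F_n\subseteq\omega$ with $\bigcup_n\bigcup_{k\in F_n}O_{n,k}=Z$; pulling back along $f$ gives $\bigcup_n\bigcup_{k\in F_n}W_{n,k}=X$, which finishes the Menger case. The Hurewicz case is identical: replace ``covers'' by ``covers every point for all but finitely many $n$'' throughout, and note that the $f$-preimage of such a selection for $Z$ is one for $X$. The only step that genuinely uses the assumptions on $X$ — and the one I would be most careful about — is the passage to countable cozero covers, where both Tychonoffness and Lindel\"ofness enter; everything after that is formal bookkeeping with the diagonal map into the Hilbert cube.
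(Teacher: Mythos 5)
Your argument is correct: both directions check out, including the key reduction (via Tychonoffness and Lindel\"ofness) to countable cozero covers and the diagonal embedding into the Hilbert cube, whose image is a separable metrizable continuous image to which the projective hypothesis applies. The paper itself only cites this proposition (to Telg\'arsky, Ko\v{c}inac, and Bonanzinga--Cammaroto--Matveev) without proof, and your argument is the standard one, using exactly the diagonal-map technique that the paper employs in its own Theorems on the projective Hurewicz and Menger properties.
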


It is known that both the Menger and Hurewicz property of a space $X$ can be conveniently characterized by the way $X$ is positioned in its \v{C}ech-Stone compactification $\beta X$. The aim of the present note is to provide analogous descriptions for their projective versions. Our interest in this sort of characterizations
primarily stems from a recent work by the first author \cite{Kr1}, \cite{Kr2}, where they play an important role.


\section{Notation}

In what follows, the set of all natural numbers (including $0$) will be denoted by $\omega$. By $\omega^{< \omega} = \bigcup_{n \in \omega} \omega^n$ we will mean the set of all finite sequences with values in $\omega$ (including $\emptyset$), and as a natural extension, by $\omega^{\omega}$ we will denote the set of all infinite sequences with values in $\omega$.
If $s=(a_0,\ldots ,a_{n-1})\in \omega^{<\omega}$ and if $k\leq n$, then $s|k=(a_0,\ldots a_{k-1})$
is an initial segment of $s$ of length $k$. By $s|0$ we mean the empty sequence. The symbol $|s|$ stands for the length of $s$. 
For $s=(a_0,\ldots ,a_{k})\in \omega^{<\omega}$ and $n\in\omega$, by $s^{\frown}n$ we will denote the sequence $(a_0,\ldots , a_{k},n)$.

For a space $X$ by $\beta X$ we denote the \v{C}ech-Stone compactification of $X$. Recall that a subset $A$ of a topological space $X$ is called a \textit{zero-set} if there is a continuous map $f:X\to [0,1]$ such that $A=f^{-1}(0)$. According to Vedenissov's lemma (see \cite[1.5.12]{Eng}) if $Z$ is a compact space, then $A$ is a zero-set in $Z$ if and only if $A$ is closed $G_\delta$-subset of $Z$. The complement of a zero-set is called a \textit{cozero-set}.

Let us describe two topological games that will be of interest: the $k$-Porada game introduced by Telg\'{a}rsky in \cite{T}, and our own modification of it which we shall call the $z$-Porada game.

Let $Z$ be a compact space and let $X\subseteq Z$ be a subspace of $Z$.
The \textit{$k$-Porada game} on $Z$ with values in $X$ is a game with $\omega$-many innings, played alternately
by two players: I and II.
Player I begins the game and makes the first move by choosing
a pair $(K_0,U_0)$, where $K_0\subseteq X$ is a nonempty compact set and $U_0$ is an open set in $Z$ that contains $K_0$. Player II responds by choosing an open
(in $Z$) set $V_0$ such that
$K_0\subseteq V_0\subseteq U_0$. In the second round of the game, player I picks a pair $(K_1,U_1)$, where
$K_1 \subseteq X$ is a nonempty compact subset of $V_0$ and $U_1$ is an open subset of $Z$ with
$K_1\subseteq U_1\subseteq V_0$. Player II responds by picking an open (in $Z$) set $V_1$ such that $K_1\subseteq V_1\subseteq U_1$. The game continues in this way and stops
after $\omega$ many rounds. Player II wins the game if $\emptyset\neq \bigcap_{n\in\omega} U_n (= \bigcap_{n\in \omega} V_n) \subseteq X$.
Otherwise player I wins.
The game described above is denoted by $kP(Z,X)$.

Let us introduce the following modification of the $k$-Porada game. As above, $Z$ is a compact space and $X$ is a subspace of $Z$. The \textit{$z$-Porada game} on $Z$ with values in $X$, denoted by $zP(Z,X)$, is played as $kP(Z,X)$ with the only difference that compact sets $K_n$ played by player I are required to be zero-sets in $Z$ (i.e. compact $G_\delta$). We keep the requirement that these sets are contained in $X$.

For a space $Y$ we denote by $\mathcal{T}_Y$ (resp., $\mathcal{K}(Y)$) the collection of all nonempty open (resp., compact) subsets of $Y$. For a space $Y$ and its subspace $X$ we denote by $\mathcal{Z}(Y,X)$ the collection of all nonempty zero-sets in $Y$ contained in $X$.
A \textit{strategy} for player I in the game $kP(Z,X)$ (resp., in the game $zP(Z,X)$) is a map $\sigma$ defined inductively as follows: Set $V_{-1}=Z$.
If the strategy $\sigma$ is defined for the first $n$ moves, $n\in\omega$, then an $n$-tuple $(V_0,V_1,\ldots,V_{n-1})\in \mathcal{T}_Z^n$ is called
\textit{admissible} if either the tuple is empty (i.e. $n=0$) or else $K_0\subseteq V_0\subseteq U_0$ and both $K_i\subseteq V_i\subseteq U_i$, and $(K_i,U_i)=\sigma(V_0,\ldots V_{i-1})$ for $i\in\{1,\ldots , n-1\}$.
For any
admissible $n$-tuple $(V_0,\ldots, V_{n-1})$ we choose a pair $(K_n,U_n)\in \mathcal{K}(V_{n-1}\cap X)\times \mathcal{T}_{V_{n-1}}$ (resp., $(K_n,U_n)\in \mathcal{Z}(Z,V_{n-1}\cap X)\times \mathcal{T}_{V_{n-1}}$) with $K_n\subseteq U_n$ and we set
$$\sigma(V_0,\ldots ,V_{n-1})=(K_n,U_n).$$

A strategy $\sigma$ for player I in either of the games $kP(Z,X)$ or $zP(Z,X)$, is called \textit{winning} if player I wins every run of the game in which she plays according to the strategy $\sigma$.

If $f: X \rightarrow Y$ is a function, then for $A\subseteq X$ we set
$$f^{\#}(A) = Y \setminus f(X \setminus A).$$

The following lemma is immediate.

\begin{lemma}\label{obraz_kreteczka}
Suppose that $f:X \rightarrow Y$ is a continuous surjection between spaces $X$ and $Y$. We have:
\begin{enumerate}[(i)]
    \item If $X$ is compact and $U \subseteq X$ is open, then $f^{\#}(U)$ is open in $Y$
    \item For any $A\subseteq X$ and $y\in Y$ we have $y\in f^{\#}(A)$ if and only if $f^{-1}(y)\subseteq A$.
    \item For any $A\subseteq X$ and $B\subseteq Y$, if $A\subseteq f^{-1}(B)$, then $f^{\#}(A)\subseteq B$.
\end{enumerate}
\end{lemma}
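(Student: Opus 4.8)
The plan is to verify the three items directly from the definition $f^{\#}(A)=Y\setminus f(X\setminus A)$, treating (ii) as the conceptual core and obtaining the other two from it or from equally short set-theoretic manipulations.

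First I would prove (ii), since it needs no hypotheses at all beyond $f$ being a function. By definition, $y\in f^{\#}(A)$ means $y\notin f(X\setminus A)$, i.e.\ there is no $x\in X\setminus A$ with $f(x)=y$; this says precisely $f^{-1}(y)\cap(X\setminus A)=\emptyset$, which in turn is just a restatement of $f^{-1}(y)\subseteq A$. Next, for (i) I would observe that $X\setminus U$ is a closed subset of the compact space $X$, hence compact, so its image $f(X\setminus U)$ is a compact subset of $Y$; since $Y$ is Tychonoff, and in particular Hausdorff, $f(X\setminus U)$ is closed in $Y$, and therefore $f^{\#}(U)=Y\setminus f(X\setminus U)$ is open. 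This is the only place where compactness of $X$ (and the Hausdorffness of $Y$) is used.

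Finally, for (iii) I would invoke (ii): assuming $A\subseteq f^{-1}(B)$ and taking $y\in f^{\#}(A)$, part (ii) gives $f^{-1}(y)\subseteq A\subseteq f^{-1}(B)$; since $f$ is a surjection, $f^{-1}(y)$ is nonempty, so choosing any $x\in f^{-1}(y)$ yields $x\in f^{-1}(B)$, i.e.\ $y=f(x)\in B$. Hence $f^{\#}(A)\subseteq B$. (Alternatively, one verifies directly that $Y\setminus B=f\bigl(f^{-1}(Y\setminus B)\bigr)\subseteq f(X\setminus A)$, again using surjectivity, and complements.)

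I do not expect any genuine obstacle here; as the paper already signals, the lemma is immediate. The only two points that call for a moment's attention are the use of Hausdorffness of $Y$ in (i) to pass from ``compact'' to ``closed'' for $f(X\setminus U)$, and the use of surjectivity of $f$ in (iii) to ensure $f^{-1}(y)\neq\emptyset$, so that the inclusion $f^{-1}(y)\subseteq f^{-1}(B)$ really does force $y\in B$.
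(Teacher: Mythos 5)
Your proof is correct, and it supplies exactly the standard verification that the paper omits by declaring the lemma ``immediate'': (ii) by unwinding the definition of $f^{\#}$, (i) via compactness of $X\setminus U$ and Hausdorffness of $Y$, and (iii) from (ii) together with surjectivity. You also correctly flag the only two hypotheses that actually get used ($Y$ Hausdorff in (i), $f$ onto in (iii)), so there is nothing to add.
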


We will use a standard notation for the closure operator, i.e. if $X$ is a space and $A\subseteq X$, then the closure of $A$ in $X$ will be denoted by $\cl_X(A)$.

\section{The projective Hurewicz property}

The following characterization of the Hurewicz property was established by Just, Miller, Scheepers and Szeptycki \cite{JMSS} (for the subsets of the real line), Banakh and Zdomskyy \cite{BZ} (for separable metrizable spaces) and Tall \cite{Tall} (the general case).

\begin{thrm}\label{Hurewicz characterization}
Let $bX$ be a compactification of $X$. The following two conditions are equivalent:
\begin{enumerate}[(A)]
    \item $X$ has the Hurewicz property
    \item For every $\sigma$-compact subset $F$ of the remainder $b X\setminus X$,
    there exists a $G_\delta$-subset $G$ of $b X$ such that $F\subseteq G\subseteq bX\setminus X$.
\end{enumerate}
\end{thrm}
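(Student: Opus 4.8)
The plan is to prove the equivalence directly, without reducing to the separable metrizable case, following the spirit of Tall's argument. The key combinatorial fact underlying the Hurewicz property is the Hurewicz ``bounded-cover'' characterization: $X$ is Hurewicz if and only if for every sequence $(\mathscr{W}_n)$ of open covers of $X$ there are finite $\mathscr{W}_n'\subseteq\mathscr{W}_n$ with $\{\bigcup\mathscr{W}_n'\}$ an increasing-up-to-finitely-many cover, i.e. each point lies in all but finitely many $\bigcup\mathscr{W}_n'$. I would also use the dual description of $G_\delta$-subsets of $bX$ containing $X$'s complement in terms of sequences of open subsets of $bX$: a set $G=\bigcap_n O_n$ with each $O_n$ open in $bX$ satisfies $F\subseteq G\subseteq bX\setminus X$ iff $F\subseteq O_n$ for all $n$ and $\bigcap_n O_n\cap X=\emptyset$, and the latter means the relatively open cover $\{bX\setminus\cl_{bX}(O_n)\cap X\}$... more precisely that $\{(bX\setminus O_n)\cap X\}_n$ has empty intersection, so $\{X\setminus\overline{O_n}^{\,?}\}$; I will phrase it as: the cozero (in $bX$) sets $X\cap(bX\setminus\overline{O_n})$ — here one must be slightly careful and pass to a cozero set $C_n$ with $F\subseteq C_n\subseteq bX\setminus X$ shrinking $O_n$, using normality of $bX$ and $\sigma$-compactness of $F$ to write $F=\bigcup_m F_m$ with $F_m$ compact.

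For the implication (A)$\Rightarrow$(B): given $\sigma$-compact $F=\bigcup_m F_m\subseteq bX\setminus X$ with $F_m$ compact increasing, for each $n$ use compactness of $F_n$ and complete regularity to find an open $O_n\supseteq F_n$ in $bX$ whose closure misses... no — instead, for each $n$ build an open cover $\mathscr{U}_n$ of $X$ by cozero sets (in $bX$, restricted to $X$) whose closures in $bX$ avoid $F_n$; this is possible since $X$ and the compact $F_n$ can be separated pointwise and $bX$ is compact. Apply the Hurewicz property to $(\mathscr{U}_n)_n$ to get finite $\mathscr{V}_n\subseteq\mathscr{U}_n$ such that every $x\in X$ lies in $\bigcup\mathscr{V}_n$ for all but finitely many $n$. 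Let $W_n=\bigcup\{\cl_{bX}(V):V\in\mathscr{V}_n\}$, a closed subset of $bX$ disjoint from $F_n$, and let $O_n=bX\setminus W_n\supseteq F_n$, open. Then $G=\bigcap_n\bigcup_{k\ge n}? $ — one arranges $G:=\bigcap_n\bigcup_{k\geq n}O_k$; but this must contain $F$ and avoid $X$. Since $F_n\subseteq O_k$ whenever $k\ge n$ (as $F_n\subseteq F_k$ and $O_k\supseteq F_k$), $F\subseteq G$; and if $x\in X$ then $x\in\bigcup\mathscr{V}_k$ for $k\ge N$, so $x\in W_k$, i.e. $x\notin O_k$, for all large $k$, hence $x\notin G$. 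So $G$ is the desired $G_\delta$. I must double-check $G$ is genuinely $G_\delta$: $\bigcup_{k\ge n}O_k$ is open, and $G$ is the countable intersection of these, so yes.

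For (B)$\Rightarrow$(A): suppose $X$ fails Hurewicz. Then there is a sequence $(\mathscr{U}_n)$ of open covers of $X$ witnessing the failure; by a standard reduction one may take each $\mathscr{U}_n$ to consist of cozero sets of $bX$ intersected with $X$, and may replace $\mathscr{U}_n$ by the family of finite unions of its members, so that WLOG $\mathscr{U}_n$ is closed under finite unions and countable (or at least we extract the relevant countable subfamily). Enumerate; for each finite partial selection we get a point of $X$ escaping, and the failure of the Hurewicz selection principle yields, for every sequence of finite subfamilies, a point of $X$ outside $\bigcup_{n\ge N}\bigcup\mathscr{V}_n$ for infinitely many $N$. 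From this I would construct a $\sigma$-compact $F\subseteq bX\setminus X$ that cannot be separated from $X$ by a $G_\delta$: take $F=\bigcup_n F_n$ where $F_n=bX\setminus\bigcup\{$cozero hulls of the first $n$ sets of $\mathscr{U}_n\}$, a compact subset of $bX$; show $F\cap X=\emptyset$ because $\mathscr{U}_n$ covers $X$, and show any $G_\delta$ set $G=\bigcap_m O_m$ with $F\subseteq G$ forces a Hurewicz-type selection (the complements of the $O_m$, intersected with $X$, are contained in suitable finite unions from the $\mathscr{U}_n$'s by compactness), so $G\cap X\neq\emptyset$, a contradiction.

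The main obstacle I anticipate is the bookkeeping in (B)$\Rightarrow$(A): correctly packaging the failure of the Hurewicz selection principle into a single $\sigma$-compact subset of the remainder, and conversely showing that a separating $G_\delta$ mechanically produces finite subselections that cover — this requires a careful diagonal/compactness argument matching the $n$-th open cover to the $n$-th layer of $F$ and to the $n$-th defining open set of $G$. Handling the reduction to covers by cozero sets of $bX$ (using that every open cover of a Lindelöf-type... no, $X$ need not be Lindelöf — instead using that cozero sets of $X$ extend, and that it suffices to treat covers refining the canonical ones) is the other delicate point; I expect to invoke that without loss of generality the covers $\mathscr{U}_n$ may be taken to be countable, which is exactly where one might worry, but the standard trick is that the Hurewicz property is determined by its ``countable face'' via the Lindelöf number only entering through the reduction — here, since we are not assuming $X$ Lindelöf, I would instead argue contrapositively starting from an arbitrary witnessing sequence and extract countable subcovers using that $X$ embeds in the compact $bX$ and a witness to failure can be refined to one by basic cozero sets, of which $bX$ has a countable network on each compact piece.
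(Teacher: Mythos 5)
This theorem is not proved in the paper at all: it is quoted as a known result of Just--Miller--Scheepers--Szeptycki, Banakh--Zdomskyy and Tall, so there is no in-paper argument to compare against. Judged on its own, your implication (A)$\Rightarrow$(B) is essentially correct: covering $X$ for each $n$ by traces of open sets of $bX$ whose closures miss $F_n$ (with $F_n$ increasing), applying the Hurewicz selection, and setting $G=\bigcap_n\bigcup_{k\geq n}O_k$ with $O_k=bX\setminus\bigcup\{\cl_{bX}(V):V\in\mathscr{V}_k\}$ does work, and needs no countability assumptions.

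The direction (B)$\Rightarrow$(A) has two genuine gaps. First, your candidate $F_n=bX\setminus\bigcup\{\text{hulls of the first }n\text{ sets of }\mathscr{U}_n\}$ is not a subset of the remainder: finitely many members of $\mathscr{U}_n$ need not cover $X$, so $F_n$ will in general meet $X$ and the whole construction of $F$ collapses. The set that works is $F_n=bX\setminus\bigcup\{U_{n,k}:k\in\omega\}$, the complement of the union of the \emph{entire} $n$-th cover (taken as open sets of $bX$); this is a compact subset of $bX\setminus X$, and a separating $G_\delta$ $G=\bigcap_m O_m$ with the $O_m$ taken \emph{decreasing} then yields, by compactness of $bX\setminus O_n\subseteq\bigcup_k U_{n,k}$, finite subfamilies $\mathscr{V}_n$ with $X\setminus O_n\subseteq\bigcup\mathscr{V}_n$; since each $x\in X$ lies outside $O_n$ for all large $n$ (here the decreasingness is what upgrades Menger-type to Hurewicz-type selection, a point your sketch glosses over), this is the required $\gamma$-selection. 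Second, this argument needs each $\mathscr{U}_n$ to be countable, and that is not free: $X$ is not assumed Lindel\"of, and your closing remark about ``a countable network on each compact piece'' of $bX$ is not a valid substitute. The standard fix is to first derive Lindel\"ofness of $X$ from the compact-set instance of (B): if $\{U_\alpha\cap X\}$ were a cover by open sets of $bX$ with no countable subcover, then $K=bX\setminus\bigcup_\alpha U_\alpha$ is a compact subset of the remainder, and any $G_\delta$ set $\bigcap_m O_m$ with $K\subseteq\bigcap_m O_m\subseteq bX\setminus X$ would cover $X$ by the countably many compact sets $bX\setminus O_m$, each covered by finitely many $U_\alpha$ --- a contradiction. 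With Lindel\"ofness in hand the reduction to countable covers by traces of open (or cozero) subsets of $bX$ is routine, and it is cleaner to argue directly rather than contrapositively as you propose.
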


The following result was obtained by Bonanzinga, Cammaroto and Matveev (see \cite[Theorem 30]{BCM}).

\begin{thrm}(Bonanzinga, Cammaroto, Matveev) The following conditions are equivalent:
\begin{enumerate}[(A)]
    \item X is projectively Hurewicz,
    \item For every sequence $(\mathscr{U}_n)_{n\in \omega}$ of countable covers of $X$ by cozero-sets, there is a sequence $(\mathscr{V}_n)_{n\in\omega}$ such that for every $n$, $\mathscr{V}_n$ is a finite subfamily of $\mathscr{U}_n$ and for all $x \in X$, point $x$ belongs to $\bigcup \mathscr{V}_n$, for all but finitely many $n$'s (i.e. the family $\{\bigcup \mathscr{V}_n \; : \; n\in\omega\}$ is a $\gamma$-cover of $X$).
\end{enumerate}
\end{thrm}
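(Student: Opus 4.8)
The plan is to exploit the standard correspondence between separable metrizable continuous images of $X$ and countable families of cozero-sets of $X$, implemented through diagonal maps into countable powers of $[0,1]$. Once this dictionary is set up, each implication becomes a routine transfer of the (ordinary) Hurewicz property between $X$ and such an image $Y$.

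For (A) $\Rightarrow$ (B), I would start from a sequence $(\mathscr{U}_n)_{n\in\omega}$ of countable covers of $X$ by cozero-sets, observe that $\mathscr{U}:=\bigcup_{n\in\omega}\mathscr{U}_n$ is countable, fix for each $U\in\mathscr{U}$ a witnessing map $g_U:X\to[0,1]$ with $U=g_U^{-1}((0,1])$, and form the diagonal map $f:X\to[0,1]^{\mathscr{U}}$ defined by $f(x)=(g_U(x))_{U\in\mathscr{U}}$, with image $Y=f(X)$. Since $\mathscr{U}$ is countable, $Y$ is separable metrizable, hence Hurewicz by hypothesis. The sets $W_U:=Y\cap\pi_U^{-1}((0,1])$ (with $\pi_U$ the $U$-th coordinate projection) are open in $Y$, satisfy $f^{-1}(W_U)=U$, and the families $\mathscr{W}_n:=\{W_U:U\in\mathscr{U}_n\}$ are open covers of $Y$. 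Running the Hurewicz property of $Y$ on $(\mathscr{W}_n)_{n\in\omega}$ produces finite $\mathscr{W}_n'\subseteq\mathscr{W}_n$ with $\{\bigcup\mathscr{W}_n':n\in\omega\}$ a $\gamma$-cover of $Y$; choosing for each $W\in\mathscr{W}_n'$ a single $U_W\in\mathscr{U}_n$ with $W_{U_W}=W$ and setting $\mathscr{V}_n:=\{U_W:W\in\mathscr{W}_n'\}$ gives a finite $\mathscr{V}_n\subseteq\mathscr{U}_n$ with $\bigcup\mathscr{V}_n=f^{-1}(\bigcup\mathscr{W}_n')$, so pulling the $\gamma$-cover back through $f$ (if $f(x)$ eventually lies in $\bigcup\mathscr{W}_n'$, then $x$ eventually lies in $\bigcup\mathscr{V}_n$) finishes this direction.

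For (B) $\Rightarrow$ (A), I would take an arbitrary continuous surjection $f:X\to Y$ onto a separable metrizable space $Y$, fix a countable base $\mathscr{B}$ of $Y$ (every member of which is a cozero-set of $Y$, since $Y$ is metrizable), and a sequence $(\mathscr{W}_n)_{n\in\omega}$ of open covers of $Y$. Using that $Y$ is Lindel\"of, I would replace each $\mathscr{W}_n$ by the countable refining cover $\mathscr{B}_n:=\{B\in\mathscr{B}:B\subseteq W\text{ for some }W\in\mathscr{W}_n\}$, pass to the countable cozero covers $\mathscr{U}_n:=\{f^{-1}(B):B\in\mathscr{B}_n\}$ of $X$, apply (B) to obtain finite $\mathscr{V}_n\subseteq\mathscr{U}_n$ whose unions form a $\gamma$-cover of $X$, write $\mathscr{V}_n=\{f^{-1}(B):B\in\mathscr{C}_n\}$ for a finite $\mathscr{C}_n\subseteq\mathscr{B}_n$, pick $W_B\in\mathscr{W}_n$ with $B\subseteq W_B$, and check that $\mathscr{V}_n':=\{W_B:B\in\mathscr{C}_n\}$ works: for $y\in Y$ choose $x\in f^{-1}(y)$; for all but finitely many $n$ we have $x\in f^{-1}(B)$ for some $B\in\mathscr{C}_n$, whence $y\in B\subseteq W_B\subseteq\bigcup\mathscr{V}_n'$. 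Thus $\{\bigcup\mathscr{V}_n':n\in\omega\}$ is a $\gamma$-cover of $Y$, so $Y$ is Hurewicz, and since $Y$ was arbitrary, $X$ is projectively Hurewicz.

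I do not expect a genuine obstacle in either direction; the entire argument is bookkeeping around the diagonal-map construction. The two points I would be most careful about are: (i) confirming that the index set $\mathscr{U}$ in (A) $\Rightarrow$ (B) is countable, so that $[0,1]^{\mathscr{U}}$ is second countable and $Y$ is separable metrizable; and (ii) ensuring that finiteness of the chosen subfamilies survives the back-and-forth passage between $X$ and $Y$ — which is exactly why one selects representatives $U_W$ (resp.\ $W_B$) rather than taking full preimages (resp.\ images) of the given finite families.
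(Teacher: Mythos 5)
Your proposal is correct. Note, however, that the paper does not prove this statement at all: it is quoted verbatim from Bonanzinga, Cammaroto and Matveev \cite[Theorem 30]{BCM}, so there is no in-paper argument to compare against. Your argument is the standard one and is sound in both directions: the dictionary between countable families of cozero-sets of $X$ and separable metrizable continuous images of $X$ via the diagonal map into $[0,1]^{\mathscr{U}}$ handles (A)$\Rightarrow$(B), and the refinement of arbitrary open covers of a second countable image $Y$ by a countable cozero base, pulled back along the surjection, handles (B)$\Rightarrow$(A). The two points you flag as delicate are exactly the right ones, and both are handled correctly; in fact in your (A)$\Rightarrow$(B) direction the identity $f^{-1}(W_U)=U$ makes the assignment $U\mapsto W_U$ injective, so the choice of representatives $U_W$ is even canonical. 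It is worth observing that this same diagonal-map technique is precisely what the authors use in their own proofs of Theorem \ref{projective Hurewicz characterisation} and of the implication (A)$\Rightarrow$(B) of Theorem \ref{projectively_Menger_characterisation}, so your blind reconstruction is methodologically aligned with the rest of the paper even though the statement itself is only cited there.
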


The above theorem suggests the following counterpart of Theorem \ref{Hurewicz characterization}.

\begin{thrm}\label{projective Hurewicz characterisation}
For a space $X$, the following conditions are equivalent:
\begin{enumerate}[(A)]
    \item $X$ is projectively Hurewicz,
    \item For any set $F\subseteq \beta X\setminus X$ which is a countable union of zero-sets in $\beta X$, there exists a $G_\delta$-subset $G$ of $\beta X$ such that $F \subseteq G \subseteq \beta X \setminus X$.
\end{enumerate}
\end{thrm}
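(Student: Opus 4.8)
The plan is to derive the equivalence from the characterization of projective Hurewicz due to Bonanzinga, Cammaroto and Matveev quoted above, using the standard dictionary between cozero-sets of $X$ and zero-sets of $\beta X$ lying in the remainder. Concretely: if $C=X\setminus h^{-1}(0)$ is a cozero-set of $X$ (with $h\colon X\to[0,1]$ continuous) and $\bar h\colon\beta X\to[0,1]$ extends $h$, then $Z:=\bar h^{-1}(0)$ is a zero-set of $\beta X$ with $C=X\cap(\beta X\setminus Z)$; conversely, the trace on $X$ of the complement of any zero-set of $\beta X$ is a cozero-set of $X$. Under this dictionary, condition (B) is essentially the $\beta X$-translation of the $\gamma$-cover conclusion in the Bonanzinga--Cammaroto--Matveev theorem. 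I will also freely use that finite and countable intersections of zero-sets of $\beta X$ are zero-sets, and that a cozero-set of the compact space $\beta X$ is $\sigma$-compact, hence Lindel\"of.

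For the implication (B)$\Rightarrow$(A), I would start with a sequence $(\mathscr{U}_n)_n$ of countable covers of $X$ by cozero-sets, write $\mathscr{U}_n=\{C_{n,m}:m\in\omega\}$, choose zero-sets $Z_{n,m}$ of $\beta X$ with $C_{n,m}=X\setminus Z_{n,m}$, and set $Z_n:=\bigcap_m Z_{n,m}$. Since $\mathscr{U}_n$ covers $X$ we get $Z_n\cap X=\emptyset$, so $F:=\bigcup_n Z_n$ is a countable union of zero-sets of $\beta X$ inside the remainder, and (B) produces a $G_\delta$-set $G=\bigcap_j O_j$ (the $O_j$ open, and we may take them decreasing) with $F\subseteq G\subseteq\beta X\setminus X$. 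Then $Z_n\subseteq O_n$ for every $n$, and since the compact sets $Z_{n,0}\cap\dots\cap Z_{n,m}$ decrease to $Z_n$, compactness of $\beta X$ gives a $k_n$ with $Z_{n,0}\cap\dots\cap Z_{n,k_n}\subseteq O_n$. Taking $\mathscr{V}_n:=\{C_{n,0},\dots,C_{n,k_n}\}$ we have $\bigcup\mathscr{V}_n=X\setminus(Z_{n,0}\cap\dots\cap Z_{n,k_n})$; a point $x\in X$ lying outside $\bigcup\mathscr{V}_n$ for infinitely many $n$ would lie in $O_n$ for those $n$, hence in every $O_j$, hence in $G\subseteq\beta X\setminus X$ --- absurd. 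So $\{\bigcup\mathscr{V}_n:n\in\omega\}$ is a $\gamma$-cover of $X$ and the Bonanzinga--Cammaroto--Matveev theorem yields (A).

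For (A)$\Rightarrow$(B), given $F=\bigcup_n Z_n\subseteq\beta X\setminus X$ with each $Z_n$ a zero-set of $\beta X$, I would first replace $Z_n$ by $\bigcup_{l\le n}Z_l$ so that the sequence $(Z_n)$ is increasing. The key step is to manufacture, for each $n$, a countable cover of $X$ by cozero-sets that witnesses how far $X$ is from $Z_n$: since each point of $X$ misses the closed set $Z_n$, complete regularity of $\beta X$ supplies a cozero-set of $\beta X$ around it whose closure is disjoint from $Z_n$; as $\beta X\setminus Z_n$ is Lindel\"of, countably many such sets cover $X$, and after passing to finite unions we obtain an increasing sequence $(C_{n,k})_{k\in\omega}$ of cozero-sets of $\beta X$ with $\cl_{\beta X}(C_{n,k})\cap Z_n=\emptyset$ and $X\subseteq\bigcup_k C_{n,k}$. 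Feeding the covers $\mathscr{U}_n:=\{X\cap C_{n,k}:k\in\omega\}$ into the Bonanzinga--Cammaroto--Matveev theorem (and using that the $C_{n,k}$ increase in $k$, so each finite subfamily of $\mathscr{U}_n$ equals a single $X\cap C_{n,k_n}$) yields indices $k_n$ with $\{X\cap C_{n,k_n}:n\in\omega\}$ a $\gamma$-cover of $X$. I would then set $O_n:=\beta X\setminus\cl_{\beta X}(C_{n,k_n})$, an open set containing $Z_n$, and $G:=\bigcap_N\bigcup_{n\ge N}O_n$, a $G_\delta$. The $\gamma$-cover property forces every $x\in X$ to lie in $C_{n,k_n}$, hence outside $O_n$, for all but finitely many $n$, so $G\cap X=\emptyset$; and for $x\in F$ one has $x\in Z_m\subseteq Z_n\subseteq O_n$ for all $n\ge m$ (this is where increasingness of $(Z_n)$ enters), so $x\in\bigcup_{n\ge N}O_n$ for every $N$ and $x\in G$. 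Thus $F\subseteq G\subseteq\beta X\setminus X$.

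I expect the main obstacle to be exactly the construction of the covers $\mathscr{U}_n$ in (A)$\Rightarrow$(B). A projectively Hurewicz space need not be Lindel\"of, so one cannot feed arbitrary open covers of $X$ into the Bonanzinga--Cammaroto--Matveev criterion; what makes a countable cozero-cover available is that $\beta X\setminus Z_n$, being a cozero-set of a compact space, is Lindel\"of. A secondary point that must not be overlooked is that the separating neighbourhoods should be chosen with closures disjoint from $Z_n$, so that the sets $O_n$ are genuinely open and $G$ is a true $G_\delta$ rather than merely a countable intersection of $F_\sigma$'s; and the reduction to an increasing sequence $(Z_n)$ is needed to get $F\subseteq G$. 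Everything else reduces to the compactness and bookkeeping arguments sketched above.
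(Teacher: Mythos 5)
Your proof is correct, but it takes a genuinely different route from the paper's. You derive both implications from the Bonanzinga--Cammaroto--Matveev cover characterization of the projective Hurewicz property, together with the standard dictionary between cozero-sets of $X$ and zero-sets of $\beta X$: for (B)$\Rightarrow$(A) you encode a sequence of countable cozero covers as a countable union of zero-sets in the remainder and recover the finite subfamilies by compactness of the decreasing intersections $Z_{n,0}\cap\dots\cap Z_{n,m}$; for (A)$\Rightarrow$(B) you exploit that $\beta X\setminus Z_n$ is a cozero-set of a compact space, hence an increasing union of cozero-sets with closures missing $Z_n$, feed the resulting covers into the $\gamma$-cover criterion, and assemble $G=\bigcap_N\bigcup_{n\ge N}O_n$. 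The paper instead works with the compactification characterization of the (non-projective) Hurewicz property (Theorem \ref{Hurewicz characterization}): for (A)$\Rightarrow$(B) it forms the diagonal map $f:\beta X\to[0,1]^\omega$ of the witnessing functions, applies Theorem \ref{Hurewicz characterization} to the separable metrizable image $Y=f(X)$ inside $bY=f(\beta X)$, and pulls the $G_\delta$-set back; for (B)$\Rightarrow$(A) it pulls a $\sigma$-compact subset of $bY\setminus Y$ back along the extension $\widetilde f$, applies (B), and pushes the resulting $G_\delta$-set forward with $\widetilde f^{\#}$. Your argument is more combinatorial and avoids constructing auxiliary metrizable compactifications, at the cost of using the Bonanzinga--Cammaroto--Matveev theorem as a black box; the paper's argument uses only the single classical Theorem \ref{Hurewicz characterization} and makes the structural analogy with that theorem transparent. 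The only points to polish in a write-up are the small bookkeeping issues you already flag (taking the $O_j$ decreasing, handling possibly empty finite subfamilies returned by the $\gamma$-cover criterion, and the reduction to an increasing sequence $(Z_n)$), none of which is a gap.
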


\begin{proof}
Let us assume that $X$ is projectively Hurewicz and let $(F_n)_{n \in \omega}$ be a sequence of zero-sets in $\beta X$ such that
\[
F = \bigcup_{n \in \omega} F_n \subseteq \beta X \setminus X.
\]
For $n\in \omega$ let
$f_n : \beta X \to [0,1]$
be a continuous map witnessing that $F_n$ is a zero-set, i.e.
$F_n = f_n^{-1}(0)$. Let
\[
f: \beta X \to [0,1]^{\omega},
\]
be the diagonal map given by the family $\{f_n:n\in \omega\}$, i.e.
$$f(x) = (f_n(x))_{n<\omega}.$$

Denote $f(X) = Y \subseteq [0,1]^{\omega}$ and $f(\beta X) = b Y$. 
Observe that
\begin{equation}\label{equation 1}
    \bigcup_{n\in \omega} f(F_n)\subseteq bY\setminus Y.
\end{equation}

Indeed, if $y\in f(F_n)$, then $y(n)=0$ so $f^{-1}(y)\subseteq f_n^{-1}(0)=F_n\subseteq \beta X\setminus X$. On the other hand $y\in Y$ would give $f^{-1}(y)\cap X\neq\emptyset$, because $f$ maps $X$ onto $Y$.

By our assumption, $X$ is projectively Hurewicz, so $Y$ has the Hurewicz property being a separable metrizable continuous image of $X$. It follows from \eqref{equation 1} and Theorem \ref{Hurewicz characterization} that there is
a $G_\delta$-subset $H$ of $bY$ with
\[
\bigcup_{n\in\omega} f(F_n) \subseteq H \subseteq bY \setminus Y.
\]
We set $G=f^{-1}(H)$.
Since $f$ maps $X$ onto $Y$, we get
$F\subseteq G\subseteq \beta X\setminus X.$


Conversely, assume condition $(B)$. We need to show that $X$ is projectively Hurewicz.
Let $Y$ be a separable metric space and let $f:X \to Y$ be a continuous surjection. Let $bY$ be a metrizable compactification of $Y$.

The map $f$ can be uniquely extended to the continuous map $\widetilde{f}:\beta X \rightarrow bY$.
In order to prove that $Y$ has the Hurewicz property, we will apply Theorem \ref{Hurewicz characterization}. To this end, let us fix a $\sigma$-compact subset $F$ of $bY \setminus Y$. We need to find a $G_\delta$-set $H$ in $bY$, such that $F \subseteq H \subseteq bY \setminus Y$.

Let $D=\widetilde{f}^{-1}(F)$. Since $F$ is a $\sigma$-compact subset of a metric space $bY$, it is a countable union of zero-sets so the set $D$ is a countable union of zero-sets too. Moreover, since $\widetilde{f}$ maps $X$ onto $Y$ and $F\cap Y=\emptyset$, we have $D\subseteq \beta X\setminus X$. Now, condition $(B)$ provides a $G_\delta$-set $G$ in $\beta X$ such that
$$D \subseteq G \subseteq \beta X \setminus X.$$

Write $G=\bigcap_{n\in \omega} G_n$, where $G_n$ is open for every $n\in \omega$. For each $n\in \omega$ we set
$$H_n=\widetilde{f}^{\#}(G_n).$$
By Lemma \ref{obraz_kreteczka} the set $H_n$ is open, for every $n\in \omega$.
We claim that $H=\bigcap_{n \in \omega} H_n$ is the desired $G_\delta$-subset of $bY$, i.e.
$$F \subseteq \bigcap_{n \in \omega} H_n \subseteq bY \setminus Y.$$
Indeed, if $y\in F$, then $\widetilde{f}^{-1}(y)\subseteq D\subseteq G$, so $\widetilde{f}^{-1}(y)\subseteq G_n$, for every $n\in \omega$. This gives $y\in H_n$, for every $n\in \omega$ (cf. Lemma \ref{obraz_kreteczka}). For the second inclusion, fix $y\in \bigcap_{n\in \omega}H_n$. It follows from Lemma \ref{obraz_kreteczka} that $\widetilde{f}^{-1}(y)\subseteq \bigcap_{n\in \omega}G_n=G$. Since $G$ is disjoint from $X$ and $\widetilde{f}$ maps $X$ onto $Y$  we have $y\in bY\setminus Y$.
\end{proof}


\section{The projective Menger property}

The theorem below, characterizing the Menger property, is due to Telg\'arsky \cite[Theorem 2]{T}.

\begin{thrm}(Telg\'arsky)\label{thrm_Telgarsky}
Let $bX$ be a compactification of $X$. The following two conditions are equivalent.
\begin{enumerate}[(A)]
    \item $X$ has the Menger property
    \item Player I has no winning strategy in the $k$-Porada game\\ $kP(b X,b X\setminus X)$
\end{enumerate}
\end{thrm}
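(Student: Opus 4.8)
The plan is to reduce Theorem~\ref{thrm_Telgarsky} to Hurewicz's classical game-theoretic description of the Menger property: $X$ is Menger if and only if player ONE has no winning strategy in the \emph{Menger game} $\mathsf{MG}(X)$, in which ONE plays open covers $\mathscr{U}_n$ of $X$ and TWO responds with finite subfamilies $\mathscr{V}_n\subseteq\mathscr{U}_n$, TWO winning if $\bigcup_n\bigcup\mathscr{V}_n=X$. Granting this, it is enough to prove that player I has a winning strategy in $kP(bX,bX\setminus X)$ if and only if player ONE has one in $\mathsf{MG}(X)$, and then negate. The bridge is a dictionary: realizing covers of $X$ by open subsets of $bX$, a cover $\mathscr{U}$ corresponds to the compact set $K(\mathscr{U})=bX\setminus\bigcup\mathscr{U}\subseteq bX\setminus X$, and conversely a nonempty compact $K\subseteq bX\setminus X$ corresponds to the cover $\mathscr{U}_K=\{W\ \text{open in}\ bX:\cl_{bX}(W)\cap K=\emptyset\}$, for which $K(\mathscr{U}_K)=K$; a finite subfamily $\mathscr{V}$ of such a cover, after fattening its members slightly (using normality of $bX$), is turned into a legal move of player II of the form $V=U\setminus\cl_{bX}(\bigcup\widehat{\mathscr{V}})$, and one arranges that $\cl_{bX}(V)\cap X\subseteq X\setminus\bigcup\mathscr{V}$.

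For the direction ``ONE wins $\mathsf{MG}(X)$ $\Rightarrow$ I wins $kP(bX,bX\setminus X)$'' (equivalently ``$X$ not Menger $\Rightarrow$ I has a winning strategy''), player I simulates ONE's strategy: at round $n$ she plays $(K_n,U_n)$ built from ONE's current cover, \emph{relativised} to II's previous move $V_{n-1}$ (this is possible because the part of $X$ lying outside $V_{n-1}$ has already been trapped by the finite subfamilies extracted so far). Reading II's response $V_n$ as the complement of a finite union, she extracts the corresponding finite subfamily of the original cover and feeds it back to ONE's strategy. Chasing complements (using $bX\setminus V_n\subseteq\bigcup\widehat{\mathscr{U}_n}$, which holds since $K_n\subseteq V_n$) one checks that a point of $X$ which ONE's winning strategy keeps uncovered lies in $V_n$ for every $n$, hence in $\bigcap_n U_n$; so $\emptyset\neq\bigcap_n U_n\cap X$ and player I wins.

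The converse, ``$X$ Menger $\Rightarrow$ player I has no winning strategy'', is the substantive half. Starting from a strategy $\sigma$ for player I in $kP(bX,bX\setminus X)$ one runs it inside $\mathsf{MG}(X)$: whenever TWO plays a finite $\mathscr{V}_n\subseteq\mathscr{U}_n$, it is converted (with the fattening above) into a legal Porada response $V_n$ with $\cl_{bX}(V_n)\cap X\subseteq X\setminus\bigcup\mathscr{V}_n$; $\sigma$ then dictates $(K_{n+1},U_{n+1})$ and ONE plays $\mathscr{U}_{K_{n+1}}$, again relativised to $V_n$. If $\sigma$ were winning, then along every such simulated run $\bigcap_n U_n$ is either empty or meets $X$; since $\bigcap_n\cl_{bX}(V_n)\cap X\subseteq X\setminus\bigcup_n\bigcup\mathscr{V}_n$, in the case where it meets $X$ a point of $X$ has been left uncovered, so ONE has already won $\mathsf{MG}(X)$ — and this happening for every run would contradict that $X$ is Menger.

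The point I expect to need genuine work, and the main obstacle, is the remaining possibility $\bigcap_n U_n=\emptyset$ (with $\bigcap_n\cl_{bX}(U_n)$ disjoint from $X$): here player I wins the Porada game ``for free'' without the Menger selection having failed to cover $X$, so the crude simulation above does not by itself deliver a contradiction. Resolving it should require using the Menger property of $X$ more substantively in the conversion — for instance arranging that the simulated Porada runs one considers always retain a point witnessing $\bigcap_n V_n\neq\emptyset$, or passing to winning strategies of a special (monotone / coding) form — so that a Menger selection covering $X$ forces $\bigcap_n U_n$ to be a nonempty subset of $bX\setminus X$, i.e.\ a win for player II, contradicting that $\sigma$ is winning. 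Matching the nesting condition $U_{n+1}\subseteq V_n$ together with the non-emptiness clause of the Porada game against the ``flat'' Menger game is the delicate heart of the argument.
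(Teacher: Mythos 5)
This theorem is quoted by the paper from Telg\'arsky's article \cite{T} and is not proved in the text, so there is no in-paper proof to compare against; but your route --- reducing to the Hurewicz/Menger game and showing that player I wins $kP(bX,bX\setminus X)$ iff ONE wins the Menger game on $X$ --- is exactly the route Telg\'arsky took, as the paper's own Remark at the end of Section~4 records. Your direction ``$X$ not Menger $\Rightarrow$ I wins $kP$'' is only sketched, but the idea is sound: the clean way to run it is to maintain the invariant $bX\setminus V_n\subseteq\bigcup_{j\leq n}\bigcup\widehat{\mathscr{V}_j}$ and to let I play $K_n=\cl_{bX}(V_{n-1})\setminus\bigl(\bigcup\widehat{\mathscr{U}_n}\cup\bigcup_{j<n}\bigcup\widehat{\mathscr{V}_j}\bigr)$; the invariant is precisely what guarantees $K_n\subseteq V_{n-1}$, nonemptiness of $K_n$ follows from the absence of a good selection, and any point of $X$ missed by $\bigcup_n\bigcup\mathscr{V}_n$ then lies in every $V_n$.

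The genuine gap is the one you flag yourself in the hard direction: the possibility that $\sigma$ ``wins for free'' because $\bigcap_n U_n=\emptyset$. This is a real hole in your write-up as it stands, but your diagnosis of what is needed to close it is off: it does not require using the Menger property more substantively, nor monotone or coding strategies. The fix is purely topological and lives entirely on player II's side of the simulation: whenever you convert TWO's finite selection $\mathscr{V}_n$ into a Porada response, choose $V_n$ so that additionally $K_n\subseteq V_n\subseteq\cl_{bX}(V_n)\subseteq U_n\setminus\bigcup\{\cl_{bX}(W):W\in\widehat{\mathscr{V}_n}\}$, which is possible by normality of the compact Hausdorff space $bX$ since $K_n$ is compact and misses each $\cl_{bX}(W)$. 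Then $\cl_{bX}(V_{n+1})\subseteq U_{n+1}\subseteq V_n$, so the closures $\cl_{bX}(V_n)$ form a decreasing sequence of nonempty closed sets in the compact space $bX$, whence $\bigcap_n V_n=\bigcap_n\cl_{bX}(V_n)\neq\emptyset$ in \emph{every} simulated run; combined with $\bigcap_n V_n\cap X\subseteq X\setminus\bigcup_n\bigcup\mathscr{V}_n=\emptyset$ for a run won by TWO (which exists because $X$ is Menger), player II wins that run and $\sigma$ is not winning. This shrinking-of-closures device is exactly conditions (iii) and (v) in the paper's proof of the projective analogue, Theorem~\ref{projectively_Menger_characterisation}, where the same compactness argument (``$\bigcap_{n}V_{s|n}=\bigcap_{n}\cl_{\beta X}(V_{s|n})$, so this intersection must be nonempty'') closes the corresponding case. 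With that one addition your outline becomes a complete proof.
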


The following result was established by Bonanzinga, Cammaroto and Matveev (see \cite[Theorem 6]{BCM}).

\begin{thrm}(Bonanzinga, Cammaroto, Matveev) The following conditions are equivalent:
\begin{enumerate}[(A)]
\item $X$ is projectively Menger,
\item For every sequence $(\mathscr{U}_n)_{n\in \omega}$ of countable covers of $X$ by cozero-sets, there is a sequence $(\mathscr{V}_n)_{n\in\omega}$
such that for every $n$, $\mathscr{V}_n$ is a finite subfamily of $\mathscr{U}_n$ and the family $\bigcup_{n\in\omega} \mathscr{V}_n$ covers $X$.
\end{enumerate}

\end{thrm}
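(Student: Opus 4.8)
The plan is to prove both implications directly, factoring every relevant family through a separable metrizable continuous image of $X$ obtained as the diagonal of the continuous functions that define the cozero-sets in play. The guiding observation is that a countable cover of $X$ by cozero-sets is, up to such a factorization, nothing more than a countable open cover of a separable metrizable space, so that the selection principle in $(B)$ is exactly the Menger property read off from a factor. This mirrors the strategy behind Theorem \ref{projective Hurewicz characterisation}, but since the present statement speaks only about covers, no \v{C}ech--Stone compactification is needed.

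For $(A)\Rightarrow(B)$, I would start with a sequence $(\mathscr{U}_n)_{n\in\omega}$ of countable cozero covers, write $\mathscr{U}_n=\{U_{n,k}:k\in\omega\}$, and for each pair $(n,k)$ fix a continuous $h_{n,k}:X\to[0,1]$ with $U_{n,k}=\{x\in X: h_{n,k}(x)\neq 0\}$. Let $g:X\to[0,1]^{\omega\times\omega}$ be the diagonal map $g(x)=(h_{n,k}(x))_{(n,k)}$ and put $Y=g(X)$, a separable metrizable continuous image of $X$, hence Menger by $(A)$. For each $(n,k)$ set $W_{n,k}=\{y\in Y: \pi_{(n,k)}(y)\neq 0\}$, where $\pi_{(n,k)}$ is the corresponding coordinate projection; then $W_{n,k}$ is open in $Y$ and $g^{-1}(W_{n,k})=U_{n,k}$. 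Because $g$ maps $X$ onto $Y$ and $\mathscr{U}_n$ covers $X$, the family $\mathscr{W}_n=\{W_{n,k}:k\in\omega\}$ is an open cover of $Y$. Applying the Menger property of $Y$ to $(\mathscr{W}_n)_{n\in\omega}$ yields finite subfamilies $\mathscr{W}_n'\subseteq\mathscr{W}_n$ whose union covers $Y$; the corresponding finite subfamilies $\mathscr{V}_n=\{U_{n,k}: W_{n,k}\in\mathscr{W}_n'\}\subseteq\mathscr{U}_n$ then have $\bigcup_{n\in\omega}\mathscr{V}_n$ covering $X$, since the $g$-preimage of a cover of $Y$ covers $X$.

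For $(B)\Rightarrow(A)$, let $f:X\to Y$ be a continuous surjection onto a separable metrizable $Y$, and let $(\mathscr{G}_n)_{n\in\omega}$ be open covers of $Y$; I must produce finite $\mathscr{G}_n'\subseteq\mathscr{G}_n$ whose union covers $Y$. Since $Y$ is separable metrizable it is Lindel\"of, so I may replace each $\mathscr{G}_n$ by a countable subcover, and in a metric space every open set is a cozero-set, so these are countable cozero covers of $Y$. Pulling back, $\mathscr{U}_n=\{f^{-1}(G):G\in\mathscr{G}_n\}$ are countable covers of $X$ by cozero-sets, as preimages of cozero-sets under a continuous map are cozero-sets. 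Condition $(B)$ then supplies finite $\mathscr{V}_n\subseteq\mathscr{U}_n$ with $\bigcup_{n\in\omega}\mathscr{V}_n$ covering $X$; writing $\mathscr{V}_n=\{f^{-1}(G):G\in\mathscr{G}_n'\}$ for finite $\mathscr{G}_n'\subseteq\mathscr{G}_n$ and using surjectivity of $f$, every $y\in Y$ has a preimage $x\in f^{-1}(G)$ for some $G\in\mathscr{G}_n'$ and some $n$, whence $y\in G$. Thus $\bigcup_{n\in\omega}\mathscr{G}_n'$ covers $Y$, so $Y$ is Menger and $X$ is projectively Menger.

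I expect the only delicate points to be bookkeeping rather than conceptual. In $(A)\Rightarrow(B)$ the crux is the identity $g^{-1}(W_{n,k})=U_{n,k}$ together with the fact that the single image $Y$ simultaneously realizes every cozero-set in every cover as the trace of a coordinate cozero-set, so that all the covers $\mathscr{U}_n$ transfer faithfully through the one map $g$; once this is arranged, the Menger property of $Y$ does all the work. In $(B)\Rightarrow(A)$ the one thing to verify carefully is the reduction of arbitrary open covers to countable cozero covers, which relies precisely on $Y$ being separable metrizable (Lindel\"of, with every open set cozero); this is exactly what makes the cozero-restricted selection principle in $(B)$ strong enough to recover the full Menger property of every separable metrizable continuous image of $X$.
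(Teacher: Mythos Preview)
The paper does not actually prove this theorem; it is quoted as a known result of Bonanzinga, Cammaroto and Matveev \cite[Theorem~6]{BCM} and used without proof. Your argument is correct and is precisely the natural one: in each direction you factor the relevant countable cozero data through a single separable metrizable continuous image via a diagonal map, which is the same device the paper itself employs in the proof of Theorem~\ref{projective Hurewicz characterisation}. The only cosmetic point worth tightening is in $(B)\Rightarrow(A)$, where writing $\mathscr{V}_n=\{f^{-1}(G):G\in\mathscr{G}_n'\}$ implicitly assumes $G\mapsto f^{-1}(G)$ is injective; it need not be, but one simply chooses for each member of $\mathscr{V}_n$ one $G$ in the countable subcover with that preimage, and the resulting $\mathscr{G}_n'$ is still finite and contained in the original $\mathscr{G}_n$.
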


This suggests the following counterpart of Theorem \ref{thrm_Telgarsky}.

\begin{thrm}\label{projectively_Menger_characterisation}
The following two conditions are equivalent:
\begin{enumerate}[(A)]
    \item $X$ has the projective Menger property
    \item Player I has no winning strategy in the $z$-Porada game\\ $zP(\beta X,\beta X\setminus X)$
\end{enumerate}
\end{thrm}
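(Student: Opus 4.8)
The plan is to mimic the two directions of the proof of Theorem \ref{projective Hurewicz characterisation}, replacing the $G_\delta$-characterization of Hurewicz by the Porada-game characterization of Menger (Theorem \ref{thrm_Telgarsky}), and — this is the essential new ingredient — tracking how strategies in the Porada game transport along continuous surjections. The key geometric fact to set up first is the relationship between a continuous surjection $f\colon\beta X\to bY$ (where $bY$ is a metrizable compactification of a separable metrizable image $Y$ of $X$) and the games $zP(\beta X,\beta X\setminus X)$ and $kP(bY, bY\setminus Y)$. The point is that, since $bY$ is metrizable, every compact subset of $bY$ is a zero-set (closed $G_\delta$), so a move $(K,U)$ of player~I in $kP(bY,bY\setminus Y)$ can be pulled back: $f^{-1}(K)$ is a \emph{compact $G_\delta$}, hence a zero-set in $\beta X$, and it is contained in $f^{-1}(bY\setminus Y)\subseteq\beta X\setminus X$ (since $f$ maps $X$ onto $Y$). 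Conversely, a move $(K',U')$ of player~I in $zP(\beta X,\beta X\setminus X)$ can be pushed forward via $f^{\#}$: by Lemma \ref{obraz_kreteczka}, $f^{\#}(U')$ is open in $bY$ and $f^{\#}(K')$ is a nonempty (as soon as $K'\neq\emptyset$ and... — careful here) subset of $bY\setminus Y$ contained in $f^{\#}(U')$; one passes to its closure to get a legal compact move. Player~II's moves (open sets $V$) are handled dually: pull back an open $V$ in $bY$ to $f^{-1}(V)$ in $\beta X$, or push an open $V'$ in $\beta X$ to $f^{\#}(V')$ in $bY$.

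For the direction $(B)\Rightarrow(A)$: assume player~I has no winning strategy in $zP(\beta X,\beta X\setminus X)$, fix a separable metrizable continuous surjection $f\colon X\to Y$, extend to $\widetilde f\colon\beta X\to bY$ for a metrizable compactification $bY$ of $Y$, and suppose toward a contradiction that player~I has a winning strategy $\tau$ in $kP(bY,bY\setminus Y)$ (which by Theorem \ref{thrm_Telgarsky} would mean $Y$ is not Menger). I would convert $\tau$ into a strategy $\sigma$ for player~I in $zP(\beta X,\beta X\setminus X)$: given an admissible sequence of II's moves $V_0,\dots,V_{n-1}$ in $\beta X$, push them forward to $\widetilde f^{\#}(V_0),\dots,\widetilde f^{\#}(V_{n-1})$, feed this to $\tau$ to obtain a move $(K_n,U_n)$ in $bY$, and let $\sigma$ respond with $(\widetilde f^{-1}(K_n),\,\widetilde f^{-1}(U_n))$ — here $\widetilde f^{-1}(K_n)$ is a zero-set in $\beta X$ inside $\beta X\setminus X$ because $bY$ is metrizable and $K_n\subseteq bY\setminus Y$. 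Since $\sigma$ is not winning, there is a $\sigma$-run with $\emptyset\neq\bigcap_n U_n^{\beta X}\subseteq\beta X\setminus X$; I would check that the corresponding pushed-forward run is a legal $\tau$-run in $bY$ with $\bigcap_n U_n^{bY}\supseteq\widetilde f(\bigcap_n U_n^{\beta X})\neq\emptyset$ and $\subseteq bY\setminus Y$ (the latter because $\widetilde f$ carries $X$ onto $Y$), so player~II wins that $\tau$-run — contradicting that $\tau$ is winning.

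For the direction $(A)\Rightarrow(B)$: assume $X$ is projectively Menger and suppose player~I has a winning strategy $\sigma$ in $zP(\beta X,\beta X\setminus X)$. From $\sigma$ I want to manufacture a separable metrizable continuous image $Y$ of $X$ that fails Menger, contradicting Proposition-style reasoning plus Theorem \ref{thrm_Telgarsky}. The idea: run $\sigma$ against all possible countable "rational" plays of II; the zero-sets $K_s$ and cozero data $U_s$ produced along this tree are each witnessed by a continuous map $\beta X\to[0,1]$, and there are only countably many nodes $s\in\omega^{<\omega}$, so the diagonal $f\colon\beta X\to[0,1]^\omega$ of all these maps (restricted to $X$, with $Y:=f(X)$, $bY:=f(\beta X)$) is a separable metrizable image of $X$ into which all the relevant sets are "coded." One then shows $\sigma$ pushes forward (via $f^{\#}$, taking closures of the pushed-forward zero-sets to get compacta, and using that zero-sets map to $G_\delta$'s hence, in the metrizable $bY$, that this causes no loss) to a winning strategy for player~I in $kP(bY,bY\setminus Y)$, so $Y$ is not Menger — contradiction. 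I expect \textbf{this} direction, and specifically the bookkeeping needed to ensure the pushed-forward moves in $bY$ are legal and that a winning $\sigma$-run maps to a winning-for-I run in $bY$ (in particular that $\bigcap_n U_n$ escaping $X$ in $\beta X$ forces $\bigcap_n$ of the images to escape $Y$ in $bY$, and the nonemptiness/compactness of $f^{\#}(K_s)$-closures), to be the main obstacle; the countability of $\omega^{<\omega}$ is what makes the single image $Y$ suffice to diagonalize against the whole strategy. The metrizability of $bY$ — guaranteeing "compact $=$ zero-set" in $bY$ — is what reconciles the $z$-Porada game upstairs with the $k$-Porada game downstairs.
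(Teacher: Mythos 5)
Your $(B)\Rightarrow(A)$ direction is essentially the paper's argument: extend $f$ to $\widetilde f\colon\beta X\to bY$, pull player I's moves back through $\widetilde f^{-1}$ (compact $G_\delta$-sets of the metrizable $bY$ pull back to zero-sets of $\beta X$ missing $X$), push player II's moves forward through $\widetilde f^{\#}$, and transport the failure of the upstairs strategy downstairs. That half is sound and matches the paper.

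The gap is in $(A)\Rightarrow(B)$, and it sits exactly where you say you expect "the main obstacle." You never specify which countably many responses of player II at each node constitute the "rational plays," nor why they suffice to simulate an \emph{arbitrary} open move of player II downstairs. This is precisely where the zero-set requirement of the $z$-Porada game must be used: since each $K_s$ played by $\sigma$ is a compact $G_\delta$, one can write $K_s=\bigcap_{n\in\omega}V_{s^{\frown}n}$ with $\cl_{\beta X}(V_{s^{\frown}(n+1)})\subseteq V_{s^{\frown}n}\subseteq U_s$, and compactness then makes $\{V_{s^{\frown}n}:n\in\omega\}$ \emph{cofinal} among player II's legal answers at that node: any open $W\subseteq bY$ containing $f(K_s)$ yields the open set $f^{-1}(W)\cap U_s\supseteq K_s$ upstairs, which must contain some $V_{s^{\frown}k}$. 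That cofinality is what allows the strategy downstairs to answer $\bigl(f(K_{s^{\frown}k}),\,W\bigr)$ and what allows a losing run downstairs to select a branch of the tree, i.e.\ a run of $\sigma$ upstairs that player II wins (nonempty by the nested closures, disjoint from $X$ because the witnessing functions $f_s$ are coordinates of $f$). Without building this cofinal tree \emph{before} defining $f$, the diagonalization over $\omega^{<\omega}$ does not get off the ground.

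A second, more local defect: your transfer of player I's moves downstairs "via $f^{\#}$, taking closures of the pushed-forward zero-sets" does not produce legal moves. For a compact $K'\subseteq\beta X$ the set $f^{\#}(K')$ can be empty even when $K'\neq\emptyset$, and the honest image $f(K')$ need not be contained in $f^{\#}(U')$, since a fiber through a point of $K'$ may leave $U'$; nor does taking closures keep the set inside $bY\setminus Y$. The paper avoids all of this: player I downstairs plays the direct image $f(K_{s^{\frown}k})$ (which misses $Y$ because the coordinate $f_{s^{\frown}k}$ vanishes exactly on $K_{s^{\frown}k}$ and $K_{s^{\frown}k}\cap X=\emptyset$) paired with player II's own previous open set $W$ as the open half of the move, so no pushforward of open sets is needed in this direction at all.
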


\begin{proof}
Assume that $X$ is projectively Menger. Let $\sigma$ be a strategy for player I in the $z$-Porada game $zP(\beta X,\beta X\setminus X)$. We need to show that there is a play
$$\sigma(\emptyset),\; V_0,\; \sigma(V_0),\; V_1,\;\sigma(V_0,V_1),\;\ldots$$
where player I applies her strategy and fails, i.e. $\emptyset\neq\bigcap_{n\in \omega}{V_n}\subseteq \beta X\setminus X$.


By induction on $|s|$, for $s\in \omega^{<\omega}$, we construct
an open subset $V_s$ of $\beta X$ and 
a zero-set $K_s$ in $\beta X\setminus X$
such that:
\begin{enumerate}[(i)]
\item For every $s\in \omega^{<\omega}$, the tuple $\left(V_{s|0}, V_{s|1}, \ldots , V_{s}\right)$ is admissible.
\item If $\sigma(V_{s|0}, V_{s|1}, \ldots , V_{s})=(K,U)$, then $K_{s}=K$, for every $s\in \omega^{<\omega}$.
\item $\cl_{\beta X}(V_{s\frown (n+1)})\subseteq V_{s\frown n}$, for every $s\in \omega^{<\omega}$ and $n\in \omega$
\item $K_s=\bigcap_{n\in \omega} V_{s\frown n}$ for every $s\in \omega^{< \omega}$
\item $\cl_{\beta X}(V_{s\frown n})\subseteq V_{s}$, for every $s\in \omega^{< \omega}$ and $n\in \omega$.
\end{enumerate}

\medskip

We have $\sigma(\emptyset)=(L,W)$, for some $L\in \mathcal{Z}(\beta X,\beta X\setminus X)$ and an open subset $W$ of $\beta X$ satisfying $L\subseteq W$.

Set $V_\emptyset = W$. Fix $m\in \omega$ and let $p\in \omega^{<\omega}$ be a sequence of length $m$. Suppose that the sets $V_s$ are constructed for all $s$ satisfying $|s|\leq m$ and the sets $K_s$ are constructed for all $s$ satisfying $|s|<m$, in such a way that the conditions (i)--(v) are satisfied. We will define $K_p$ and $V_{p\frown n}$, for all $n\in \omega$.
Since (i) holds for $p$, the pair
$$\sigma\left(V_{p|0}, V_{p|1}, \ldots , V_{p}\right)=(K,U)$$
is well defined and $K\subseteq U\subseteq V_p$. We set $K_p=K$. Now, the set $K_p$ is a zero-subset of $\beta X$ so we can write
\begin{align*}
K_p=\bigcap_{n\in \omega} V'_n,&\mbox{ where the sets } V'_n \mbox{ are open and }\\
&\cl_{\beta X}(V'_{n+1})\subseteq V'_n \subseteq  \cl_{\beta X}(V'_{n})\subseteq U, \mbox{ for all }n\in\omega.
\end{align*}
We set $V_{p\frown n}=V'_n$. This finishes the inductive construction.

To simplify notation, let us denote
$$Q=\omega^{<\omega}.$$
For every $s\in Q$, fix a continuous map $f_s:\beta X\to [0,1]$ witnessing $K_s$ being zero-subset of $\beta X$, i.e.
\begin{equation}
f^{-1}_s(0)=K_s. \label{(1)}
\end{equation}
Let $f:\beta X\to [0,1]^Q$ be the diagonal map given by the family $\{f_s:s\in Q\}$, i.e.
$$f(x)=\left(f_s(x)\right)_{s\in Q}\in [0,1]^Q.$$
Let $M=f(X)$ and $bM=f(\beta X)$. Since, $Q$ is countable, the space $bM$ is metrizable.

Note that

\begin{equation}
f(K_s)\subseteq bM\setminus M, \mbox{ for every } s\in Q.
\end{equation}
True, take $a\in f(K_s)$. We have $f(K_s)=\{y\in bM\subseteq [0,1]^Q:y(s)=0\}$, by \eqref{(1)}, so $a(s)=0$. On the other hand, if $a\in M$, then $a=f(x)$ for some $x\in X$ (because $M=f(X)$). But $K_s\cap X=\emptyset$, so $f(x)(s)=a(s)\neq 0$, by \eqref{(1)}; a contradiction.

Using the families $\{K_s:s\in Q\}$ and $\{V_s:s\in Q\}$, we will recursively construct a strategy $\tau$ for player I in the $k$-Porada game $kP(bM,bM\setminus M)$. We will make sure that if $(W_0,\ldots ,W_{m-1})$ is an admissible $m$-tuple for the strategy $\tau$ and if $\tau(W_0,\ldots ,W_{m-1})=(C_m,U_m)$, then there is $s\in Q$ of length $m$, such that

\begin{enumerate}[(i)]
 \setcounter{enumi}{5}
 \item $C_m=f(K_s)$
 \item $V_{s|i}\subseteq f^{-1}(W_{i-1})$, for every $1 \leq i \leq m$.
\end{enumerate}

Let $\tau(\emptyset)=(f(K_\emptyset),bM)$ and put $C_0=f(K_\emptyset)$, $U_0=bM$. Fix $n\geq 0$ and suppose that $\tau$ is defined for all admissible $n$-tuples $(W_0,\ldots W_{n-1})$ (if $n=0$, then a tuple $(W_0,\ldots W_{n-1})$ is empty) in such a way that conditions (vi)-(vii) hold. Suppose that
\begin{equation}
(W_0,\ldots ,W_{n}) \mbox{ is an admissible $(n+1)$-tuple.} \label{(3)}
\end{equation}
 By our inductive assumption, the pair $(C_n,U_n)=\tau(W_0,\ldots , W_{n-1})$ is well defined and (vi) holds, i.e. $C_n=f(K_s)$ for some $s\in \omega^n$. By \eqref{(3)}, we have
 $$f(K_s)=C_n\subseteq W_n \subseteq U_n.$$
 Thus,
 \begin{equation}
    K_s\subseteq f^{-1}(W_n). \label{(4)}
 \end{equation}
According to (ii), there is an open set $U_s\supseteq K_s$ such that
 $$(K_s,U_s)=\sigma(V_{s|0},V_{s|1}, \ldots , V_{s}).$$
Using (iii), (iv) and \eqref{(4)}, by compactness we can find $k\in\omega$ such that
\begin{equation}\label{zawieranie}
 V_{s\frown k}\subseteq f^{-1}(W_n)\cap U_s
\end{equation}
We set
$$\tau(W_0,\ldots , W_n)=(f(K_{s\frown k}), W_n).$$
This finishes the construction of $\tau$. Note that $\eqref{zawieranie}$ guarantees that (vii) holds.

By our assumption, the space $X$ is projectively Menger so $M$ must be Menger being a separable metrizable continuous image of $X$. Hence, by Theorem \ref{thrm_Telgarsky}, the strategy $\tau$ is not winning. It follows that there exists a play

$$\tau(\emptyset),\; W_0,\; \tau(W_0),\; W_1,\;\tau(W_0,W_1),\;\ldots$$

in which player I applies her strategy and fails, i.e.

\begin{equation}
    \emptyset\neq\bigcap_{n\in \omega}W_n\subseteq bM\setminus M. \label{(5)}
\end{equation}

   By condition (vii), the above play generates an infinite sequence $s\in \omega^\omega$ such that
   \begin{equation}
       V_{s|n}\subseteq f^{-1}(W_{n-1}), \mbox{ for every } n\in \omega. \label{(6)}
   \end{equation}
    According to (i) and (ii), for every $n$, the tuple
    $(V_{s|0},\ldots , V_{s|n})$ is admissible and $K_{s|n}$ is the compact set in the pair $\sigma(V_{s|0},\ldots , V_{s|n})$. This means that
    $$\sigma(\emptyset),\; V_{s|0},\; \sigma(V_{s|0}),\; V_{s|1},\; \sigma(V_{s|0}, V_{s|1}),\; \ldots$$
    is a play in the game $zP(\beta X, \beta X\setminus X)$. We claim that player II wins this run of the game.
    
    Indeed, we have $\bigcap_{n\in \omega}V_{s|n}=\bigcap_{n\in \omega}\cl_{\beta X}(V_{s|n})$, by (v). So this intersection must be nonempty by compactness. Moreover, $\bigcap_{n\in \omega}V_{s|n}\subseteq \beta X\setminus X$, by \eqref{(5)} and \eqref{(6)} and the fact that $f(X)=M$. This finishes the proof of $(A)\Rightarrow (B)$.
 

To prove the converse, assume that player I has no winning strategy in the game $zP(\beta X, \beta X\setminus X)$. Striving for a contradiction, suppose that $X$ is not projectively Menger. Consequently, $X$ maps continuously onto a separable metric non-Menger space $M$.
Let $f:X\to M$ be a continuous surjection. Since $M$ is a separable metric space, it has a metric compactification $bM$. Let
$\widetilde{f}:\beta X\to bM$ be the continuous extension of $f$. The space $M$ is not Menger, so by Theorem \ref{thrm_Telgarsky}, player I has a winning strategy $\sigma$ in the $k$-Porada game $kP(bM,bM\setminus M)$.

Using $\sigma$, we will inductively construct a winning strategy $\tau$ for player I in the game $zP(\beta X,\beta X\setminus X)$ in such a way that
for every $n\geq 0$, the following condition is satisfied:

\begin{enumerate}[(i)]
 \setcounter{enumi}{7}
\item If $(V'_0,\ldots ,V'_{n-1})$ is an admissible $n$-tuple in the game $zP(\beta X,\beta X\setminus X)$ (i.e. for the strategy $\tau$), then there is an admissible $n$-tuple $(V_0,\ldots ,V_{n-1})$ in the game $kP(bM,bM\setminus M)$ (i.e. for the strategy $\sigma$). And if
$\sigma(V_0,\ldots ,V_{n-1})=(K_n,U_n)$, then $\tau(V'_0,\ldots ,V'_{n-1})=\left(\widetilde{f}^{-1}(K_n), \widetilde{f}^{-1}(U_n)\right)$.
\end{enumerate} 

Let
$\sigma(\emptyset)=(K_0,U_0)$. Denote
$$K'_0=\widetilde{f}^{-1}(K_0)\quad \mbox{and}\quad U'_0=\widetilde{f}^{-1}(U_0).$$
Since $\widetilde{f}$ is surjective, the sets $K'_0$ and $U'_0$ are nonempty. Moreover, $K'_0$ being closed $G_\delta$-subset of $\beta X$, is a zero-set in $\beta X$.
In addition, $K'_0\subseteq \beta X\setminus X$. This is because $\widetilde{f}(X)=f(X)=M$ and $K_0$ misses $M$. Hence, the pair
$$\tau(\emptyset)=(K'_0,U'_0)$$
is a legal move for player I in the game $zP(\beta X, \beta X\setminus X)$ and the condition (viii) holds for $n=0$.

Fix $n\geq 0$ and suppose that $\tau$ is defined for all admissible $m$-tuples $(V_0,\ldots , V_{m-1})$, where $m\leq n$, and the condition (viii) holds for all such tuples. Let
$(V'_0,\ldots , V'_n)$ be an arbitrary admissible $(n+1)$-tuple in $zP(\beta X,\beta X\setminus X)$ and let $\tau(V'_0,\ldots ,V'_{n-1})$
be the last move for player I (it is well defined by the inductive assumption).
According to (viii), there is an admissible $n$-tuple $(V_0,\ldots , V_{n-1})$ in the game $kP(bM,bM\setminus M)$ such that
if $\sigma(V_0,\ldots , V_{n-1})=(K_n,U_n)$, then
$$\tau(V'_0,\ldots , V'_{n-1})=\left(\widetilde{f}^{-1}(K_n), \widetilde{f}^{-1}(U_n)\right).$$
The $(n+1)$-tuple $(V'_0,\ldots , V'_n)$ is admissible which means that
$V'_n$ is an open subset of $\beta X$ with
$$\widetilde{f}^{-1}(K_n)\subseteq V'_n\subseteq \widetilde{f}^{-1}(U_n).$$
Let
$$V_n=\widetilde{f}^\#(V'_n).$$
By Lemma \ref{obraz_kreteczka}, the set $V_n$ is open in $bM$ and $K_n\subseteq V_n \subseteq U_n$. This means that
the tuple $(V_0,\ldots, V_{n-1},V_n)$ is admissible in the game $kP(bM, bM\setminus M)$. Let
$$\sigma(V_0,\ldots, V_{n-1},V_n)=(K_{n+1},U_{n+1}).$$
Denote
$$K'_{n+1}=\widetilde{f}^{-1}(K_{n+1})\quad \mbox{and}\quad U'_{n+1}=\widetilde{f}^{-1}(U_{n+1}).$$
As for $K'_0$ and $U'_0$ we argue that these sets are nonempty, $K'_{n+1}$ is a zero-set in $\beta X$, $U'_{n+1}$ is open in $\beta X$, and
$K'_{n+1}\subseteq \beta X\setminus X$. Since $U_{n+1}\subseteq V_n=\widetilde{f}^\#(V'_n)$ we must have
$U'_{n+1}\subseteq V'_n$ (cf. Lemma \ref{obraz_kreteczka}). Therefore, the pair
$$\tau(V'_0,\ldots , V'_n)=(K'_{n+1}, U'_{n+1})$$
is a well-defined $(n+1)$-st move for player I in the game $zP(\beta X, \beta X\setminus X)$. This finishes the induction.

Let us prove that the strategy $\tau$ that we have just constructed, is winning for player I. To this end, consider an arbitrary play $P$:
$$\tau(\emptyset),\; V'_0,\; \tau(V'_0),\; V'_1,\;\tau(V'_0,\;V'_1),\; \ldots$$
in the game $zP(\beta X,\beta X\setminus X)$, where player I applies the strategy $\tau$. We need to show that either
$$\bigcap_{n\in \omega} V'_n=\emptyset \quad \mbox{or}\quad X\cap \bigcap_{n\in \omega} V'_n\neq \emptyset.$$
By (viii), the play $P$ induces a play
$$\sigma(\emptyset),\; V_0,\; \sigma(V_0),\; V_1,\;\sigma(V_0,\;V_1),\; \ldots$$
in the game $kP(bM, bM\setminus M)$ such that, for every $n\in \omega$ we have:
\begin{align}
&\mbox{If } \sigma(V_0,\ldots ,V_{n-1})=(K_n,U_n), \mbox{ then } \label{(7)}\\ 
&\tau(V'_0,\ldots ,V'_{n-1})=\left(\widetilde{f}^{-1}(K_n), \widetilde{f}^{-1}(U_n)\right).\nonumber
\end{align}
By our assumption, the strategy $\sigma$ is winning for player I, so either
\begin{equation}
    \bigcap_{n\in \omega} V_n=\bigcap_{n\in \omega} U_n=\emptyset\label{(8)}
\end{equation}
or
\begin{equation}
    M\cap\bigcap_{n\in \omega} V_n=M\cap\bigcap_{n\in \omega} U_n\neq \emptyset. \label{(9)}
\end{equation}
According to \eqref{(7)}, we have
$$\bigcap_{n\in\omega}V'_n=\bigcap_{n\in\omega} \widetilde{f}^{-1}(U_n).$$
So, if \eqref{(8)} holds, then $\bigcap_{n\in\omega}V'_n=\emptyset$, which means that player I wins in the game $zP(\beta X,\beta X\setminus X)$.
Suppose that \eqref{(9)} holds and pick $y\in M\cap\bigcap_{n\in \omega} U_n$. Since the map $f:X\to M$ is surjective, there is $x\in X$ with
$f(x)=\widetilde{f}(x)=y$, whence
$$x\in X\cap\bigcap_{n\in\omega} \widetilde{f}^{-1}(U_n)= X\cap \bigcap_{n\in \omega} V'_n.$$
In particular the latter set is nonempty which means that player I wins in this case too.
\end{proof}

\begin{rem}
It is known that a space $X$ has the Menger property if and only if player I has no winning strategy in the so-called Menger game $M(X)$ (see \cite[Section 2.3]{AD}).
Actually Telg\'arsky proved in \cite{T} a result more general than Theorem \ref{thrm_Telgarsky} stated above. Namely, he showed that player $P$ has a winning strategy in the $k$-Porada game $kP(bX,bX\setminus X)$ if and only if player $P$ has a winning strategy in the Menger game $M(X)$ on $X$, where $P\in \{I,II\}$ and $bX$ is a compactification of $X$. Now, our Theorem \ref{projectively_Menger_characterisation} can be reformulated in the following way: Player I has a winning strategy in the $z$-Porada game $zP(\beta X, \beta X\setminus X)$ if and only if, there is a continuous map $f:X\to Y$ of $X$ onto a separable metric space $Y$ such that player I has a winning strategy in the Menger game $M(Y)$.

We do not know if a similar equivalence holds true for player II.
\end{rem}

\section*{Acknowledgements}
The results presented in the paper were obtained as part of the second author's master's thesis written under the supervision of the first author at the University of Warsaw.

The research of the first author was supported by
Fundaci\'{o}n S\'{e}neca - ACyT Regi\'{o}n de Murcia project 21955/PI/22, Agencia Estatal de Investigación (Government of Spain), project PID2021-122126NB-C32 funded by\\MCIN/AEI /10.13039/501100011033 / FEDER, EU and NextGenerationEU funds through Mar\'{i}a Zambrano fellowship.

\bibliographystyle{siam}

\begin{thebibliography}{10}

\bibitem{AD}
{\sc L.~F. Aurichi and R.~R. Dias}, {\em A minicourse on topological games},
  Topology Appl., 258 (2019), pp.~305--335.

\bibitem{BZ}
{\sc T.~Banakh and L.~Zdomskyy}, {\em Separation properties between the
  {$\sigma$}-compactness and {H}urewicz property}, Topology Appl., 156 (2008),
  pp.~10--15.

\bibitem{BCM}
{\sc M.~Bonanzinga, F.~Cammaroto, and M.~Matveev}, {\em Projective versions of
  selection principles}, Topology Appl., 157 (2010), pp.~874--893.

\bibitem{Eng}
{\sc R.~Engelking}, {\em General topology}, vol.~6 of Sigma Series in Pure
  Mathematics, Heldermann Verlag, Berlin, second~ed., 1989.

\bibitem{JMSS}
{\sc W.~Just, A.~W. Miller, M.~Scheepers, and P.~J. Szeptycki}, {\em The
  combinatorics of open covers. {II}}, Topology Appl., 73 (1996), pp.~241--266.

\bibitem{Ko}
{\sc L.~D.~R. Ko\v{c}inac}, {\em Selection principles and continuous images},
  Cubo, 8 (2006), pp.~23--31.

\bibitem{Kr1}
{\sc M.~Krupski}, {\em Games and hereditary {B}aireness in hyperspaces and
  spaces of probability measures}, J. Inst. Math. Jussieu, 21 (2022),
  pp.~851--868.

\bibitem{Kr2}
\leavevmode\vrule height 2pt depth -1.6pt width 23pt, {\em Linear
  homeomorphisms of function spaces and the position of a space in its
  compactification}.
\newblock arXiv:2208.05547 [math.GN], preprint, 2022.

\bibitem{Tall}
{\sc F.~D. Tall}, {\em Lindel\"{o}f spaces which are {M}enger, {H}urewicz,
  {A}lster, productive, or {$D$}}, Topology Appl., 158 (2011), pp.~2556--2563.

\bibitem{T}
{\sc R.~Telg\'{a}rsky}, {\em On games of {T}ops\o e}, Math. Scand., 54 (1984),
  pp.~170--176.

\end{thebibliography}

\end{document}